\title[Contractive determinantal representations]{Contractive determinantal representations of stable polynomials
on a matrix polyball}
\author[Grinshpan]{Anatolii Grinshpan}
\author[Kaliuzhnyi-Verbovetskyi]{Dmitry~S.~Kaliuzhnyi-Verbovetskyi}
\author[Vinnikov]{Victor Vinnikov}
\author[Woerdeman]{Hugo J.~Woerdeman}
\address{Department of Mathematics \\
Drexel University\\
3141 Chestnut St.\\
Philadelphia, PA, 19104}
\email{\{tolya,dmitryk,hugo\}@math.drexel.edu}
\address{Department of Mathematics\\
Ben-Gurion University of the Negev\\
Beer-Sheva, Israel, 84105} \email{vinnikov@math.bgu.ac.il}
\thanks{AG, DK-V, HW were partially supported by NSF grant DMS-0901628.
DK-V and VV were partially supported by BSF grant 2010432.}
\subjclass{15A15, 32A10, 47N70, 14A22} \keywords{Contractive
determinantal representation;  stable polynomial; polyball; classical Cartan domain;
contractive realization; structured noncommutative multidimensional system.}
\theoremstyle{plain}
\newtheorem{thm}{Theorem}[section]
\newtheorem{cor}[thm]{Corollary}
\newtheorem{prop}[thm]{Proposition}
\numberwithin{equation}{section}
\newcommand{\beq}{\begin{equation}}
\newcommand{\eeq}{\end{equation}}
\newcommand{\mat}[2]{\ensuremath{{#1}^{#2\times #2}}}
\newcommand{\rmat}[3]{\ensuremath{{#1}}^{#2\times #3}}
\theoremstyle{remark}
\newcommand{\diag}{\operatorname{diag}}
\newcommand{\row}{\operatorname{row}}
\newcommand{\col}{\operatorname{col}}
\newcommand{\range}{\operatorname{range}}
\numberwithin{equation}{section}
\newcommand{\bbm}{\begin{bmatrix}}
\newcommand{\ebm}{\end{bmatrix}}
\newcommand{\dom}{\operatorname{dom}}
\newcommand{\edom}{\operatorname{edom}}
\newcommand{\rs}{\mathcal{R}}
\newcommand{\ls}{\mathcal{L}}
\newcommand{\free}{\mathcal{G}}
\begin{document}

\begin{abstract}
 We show that an irreducible polynomial $p$ with no zeros on
the closure of a matrix unit polyball, a.k.a. a cartesian product
of Cartan domains of type I, and such that $p(0)=1$, admits a
strictly contractive determinantal representation, i.e.,
$p=\det(I-KZ_n)$, where $n=(n_1,\ldots,n_k)$ is a $k$-tuple of
nonnegative integers, $Z_n=\bigoplus_{r=1}^k(Z^{(r)}\otimes
I_{n_r})$, $Z^{(r)}=[z^{(r)}_{ij}]$ are complex matrices, $p$ is a
polynomial in the matrix entries $z^{(r)}_{ij}$, and $K$ is a
strictly contractive matrix. This result is obtained via a
noncommutative lifting and a theorem on the singularities of
minimal noncommutative structured system realizations.
\end{abstract}

\maketitle

\section{Introduction}\label{sec:Intro}
Polynomial stability arises naturally in various problems of
Analysis and its applications such as Electrical Engineering and
Control Theory
\cite{Borcea,Wagner,BF,Kummert,Doyle,Gurvits,Scheicher,LXL}. A
polynomial $p\in\mathbb{C}[z_1,\ldots,z_d]$ is called stable with
respect to a domain $\mathcal{D}\subseteq\mathbb{C}^d$, or just
$\mathcal{D}$-stable, if it has no zeros in $\mathcal{D}$, and
strongly $\mathcal{D}$-stable if it has no zeros in the domain
closure $\overline{\mathcal{D}}$. In the case where $d=1$ and
$\mathcal{D}$ is the unit disk $\mathbb{D}=\{z\in\mathbb{C}\colon
|z|<1\}$, and $p(0)=1$, one can write
$$p=(1-a_1z)\cdots(1-a_nz)=\det (I-Kz),$$ where $a_i=1/z_i$,
$i=1,\ldots,n$, the zeros $z_i$ of $p$ are counted according to
their multiplicities, $K=\diag[a_1,\ldots, a_n]$, and $n=\deg p$.
It follows that the matrix $K$ is contractive (resp., strictly
contractive), i.e., $\|K\|\le 1$ (resp., $\|K\|<1$); here and
throughout the paper, $\|\cdot\|$ is the operator $(2,2)$ norm.

In the case where $d=2$ and $\mathcal{D}$ is the unit bidisk
$\mathbb{D}^2$, it is also true that a stable (resp., strongly
stable) polynomial $p$ has a contractive (resp., strictly
contractive) determinantal representation. It was shown in
\cite{GKVVW} (see also \cite{Kummert0,Kummert,Kummert1}) that
every $\mathbb{D}^2$-stable (resp., strongly
$\mathbb{D}^2$-stable) polynomial $p$, with $p(0)=1$, can be
represented as \begin{equation}\label{eq:polydisk-det} p=\det
(I-KZ_n),
\end{equation}
 where
$n=(n_1,n_2)\in\mathbb{Z}_+^2$ is the bi-degree of $p$,
$Z_n=\diag[z_1I_{n_1},z_2I_{n_2}]$\footnote{Here and in the rest
of the paper we use a convention that a matrix block which
involves $I_{n_i}$ is void in the case of $n_i$ equal to $0$.},
and the matrix $K$ is contractive (resp., strictly contractive).

For a higher-dimensional polydisk case,
$\mathcal{D}=\mathbb{D}^d$, $d>2$, it is in general not true that
every stable (resp., strongly stable) polynomial $p$, with
$p(0)=1$, has a determinantal representation
 \eqref{eq:polydisk-det}
 where now
$n=(n_1,\ldots,n_d)\in\mathbb{Z}_+^d$ is equal to the multi-degree
of $p$, $\deg p$, $Z_n=\diag[z_1I_{n_1},\ldots,z_dI_{n_d}]$, and
the matrix $K$ is contractive (resp., strictly contractive). Such
a representation with $n\ge\deg p$ (in the sense that
$n_i\ge\deg_ip$, $i=1,\ldots,d$, where $\deg_ip$ denotes the
$i$-th partial degree of $p$) has been constructed for some
special classes of stable polynomials in \cite{GKVW}.

The existence of a representation \eqref{eq:polydisk-det} with a
 contractive (resp., strictly contractive) matrix $K$ provides a certificate for
stability (resp., strong stability) of a polynomial $p$. Moreover,
if merely a polynomial multiple of $p$ has such a representation,
the stability (resp., strong stability) of $p$ is guaranteed. In a
recent paper of the authors \cite{GKVVW1}, the following result
has been obtained. Let
$\mathbf{P}\in\rmat{\mathbb{C}}{\ell}{m}[z_1,\ldots,z_d]$ and
$\mathbf{P}^{(r)}\in\rmat{\mathbb{C}}{\ell_r}{m_r}[z_1,\ldots,z_d]$,
$r=1,\ldots,k$, be such that
$\mathbf{P}=\bigoplus_{r=1}^k\mathbf{P}^{(r)}$, and let
\begin{equation}\label{eq:p-dom}
\mathcal{D}_\mathbf{P}=\{z\in\mathbb{C}^d\colon\|\mathbf{P}(z)\|<1\}.
\end{equation}
Under an appropriate Archimedean condition on $\mathbf{P}$, which
in particular implies the boundedness of the domain
$\mathcal{D}_\mathbf{P}$,  for every strongly
$\mathcal{D}_\mathbf{P}$-stable polynomial
$p\in\mathbb{C}[z_1,\ldots,z_d]$ there exists a polynomial
$q\in\mathbb{C}[z_1,\ldots,z_d]$ such that $pq$ has a
determinantal representation
\begin{equation}\label{eq:pq}
pq=\det(I-K\mathbf{P}_n),
\end{equation}
where $n=(n_1,\ldots,n_k)\in\mathbb{Z}_+^k$,
$\mathbf{P}_n=\bigoplus_{r=1}^k(\mathbf{P}^{(r)}\otimes I_{n_r})$,
and the matrix $K$ is strictly contractive. We note that special
cases of the domain $\mathcal{D}_\mathbf{P}$ as above include the
unit polydisk and the classical Cartan domains of type I, II, and
III, as well as the Cartesian products of such domains.

 In this paper, we construct strictly contractive determinantal representations for strongly stable polynomials
 on a Cartesian product of Cartan's domains of type I, i.e., on a
matrix unit polyball,
\begin{multline}\label{eq:polyball}
\mathbb{B}^{\ell_1\times
m_1}\times\cdots\times\mathbb{B}^{\ell_k\times
m_k}\\
=\Big\{Z=(Z^{(1)},\ldots,Z^{(k)})\in\rmat{\mathbb{C}}{\ell_1}{m_1}\times\cdots\times
\rmat{\mathbb{C}}{\ell_k}{m_k}\colon\|Z^{(r)}\|<1,\
r=1,\ldots,k\Big\}.\end{multline} In other words, in the case
where $\mathcal{D}_\mathbf{P}$ is a unit matrix polyball, i.e.,
where
 $\mathbf{P}^{(r)}=Z^{(r)}$, $r=1,\ldots,k$, no additional polynomial factor $q$ is needed to construct a strictly
contractive determinantal representation \eqref{eq:pq}, and we
have
$$p=\det(I-KZ_n),\quad Z_n=\bigoplus_{r=1}^k(Z^{(r)}\otimes
I_{n_r})$$ with some $n=(n_1,\ldots,n_k)\in\mathbb{Z}_+^k$ and
some $\sum_{r=1}^km_rn_r\times\sum_{r=1}^k\ell_rn_r$ matrix $K$
such that $\|K\|<1$. Notice that the unit polydisk $\mathbb{D}^d$
is a special case of a unit matrix polyball where $k=d$, and
$\ell_r=m_r=1$ for $r=1,\ldots,k$.

The proof of our main theorem has two components: realization
formulas for multivariable rational functions and related
techniques from multidimensional system theory, and results from a
theory of noncommutative rational functions.

The first component was a key for constructing determinantal
representations in
\cite{Kummert0,Kummert1,Kummert,GKVW,GKVVW,GKVVW1}. We recall
 (see \cite[Proposition 11]{Arov}) that every
  matrix-valued rational function that is regular and contractive on the
open unit disk ${\mathbb D}$ can be realized as
\begin{equation*}\label{real} F= D + zC(I-zA)^{-1} B,
\end{equation*} with a contractive colligation matrix $\left[\begin{smallmatrix} A & B \\ C & D
\end{smallmatrix}\right]$. In
several variables, the celebrated result of Agler \cite{Ag} gives
the existence of a realization of the form
\begin{equation*}\label{real2} F(z)= D + CZ_\mathcal{X}(I-AZ_{\mathcal{X}})^{-1} B,\qquad
Z_\mathcal{X} = \bigoplus_{i=1}^d z_i I_{{\mathcal X}_i},
 \end{equation*}
where $z=(z_1,\ldots,z_d)\in\mathbb{D}^d$ and the colligation
$\left[
\begin{smallmatrix} A & B \\ C & D
\end{smallmatrix}\right]$ is a Hilbert-space unitary operator (with $A$
acting on the orthogonal direct sum of Hilbert spaces ${\mathcal
X}_1, \ldots , {\mathcal X}_d$), for $F$ an operator-valued
function holomorphic on the unit polydisk $\mathbb{D}^d$ whose
Agler norm
$$\|F\|_\mathcal{A}=\sup_{T\in\mathcal{T}}\|F(T)\|$$ is at most 1.
Here $\mathcal{T}$ is the set of $d$-tuples $T=(T_1,\ldots,T_d)$
of commuting strict contractions on a Hilbert space. Such
functions constitute the Schur--Agler class.

Agler's result was generalized to polynomially defined domains in
\cite{AT,BB}. Given
$\mathbf{P}\in\rmat{\mathbb{C}}{\ell}{m}[z_1,\ldots,z_d]$, let
$\mathcal{D}_\mathbf{P}$ be as in \eqref{eq:p-dom} (here we can
assume that $k=1$ and $\mathbf{P}=\mathbf{P}_1$) and let
${\mathcal T}_\mathbf{P}$ be the set of $d$-tuples $T$ of
commuting bounded operators on a Hilbert space satisfying
$\|\mathbf{P}(T)\|<1$. (The case of unit polydisk $\mathbb{D}^d$
corresponds to  $\mathbf{P}=\diag[z_1,\ldots,z_d]$ and  ${\mathcal
T}_\mathbf{P}=\mathcal{T}$.) For $T \in {\mathcal T}_\mathbf{P}$,
the Taylor joint spectrum $\sigma(T)$ \cite{T1} lies in
$\mathcal{D}_\mathbf{P}$ (see \cite[Lemma 1]{AT}), and therefore
for an operator-valued function $F$ holomorphic on
$\mathcal{D}_\mathbf{P}$ one defines $F(T)$ by means of Taylor's
functional calculus \cite{T2} and
\begin{equation}\label{eq:Agler-norm} \| F \|_{{\mathcal A},{\mathbf P}}= \sup_{T
\in {\mathcal T}_{\mathbf{P}}} \| F(T) \|. \end{equation} We say that $F$
belongs to the operator-valued Schur--Agler class associated with
$\mathbf{P}$, denoted by
$\mathcal{SA}_\mathbf{P}(\mathcal{U},\mathcal{Y})$, if $F$ is
holomorphic on $\mathcal{D}_\mathbf{P}$, takes values in the space
$\mathcal{L}(\mathcal{U},\mathcal{Y})$ of bounded linear operators
from a Hilbert space $\mathcal{U}$ to a Hilbert space
$\mathcal{Y}$, and $\| F \|_{{\mathcal A},{\mathbf P}}\le 1$.

The generalization of Agler's theorem mentioned above that has
appeared first in \cite{AT} for the scalar-valued case and
extended in \cite{BB} to the operator-valued case, says that a
function $F$ belongs to the Schur--Agler class
$\mathcal{SA}_\mathbf{P}(\mathcal{U},\mathcal{Y})$ if and only if
there exist a Hilbert space $\mathcal{X}$ and a unitary
colligation
$$\begin{bmatrix} A & B \\ C & D\end{bmatrix}\colon
(\mathbb{C}^m\otimes \mathcal{X})\oplus
\mathcal{U}\to(\mathbb{C}^\ell\otimes \mathcal{X})\oplus
\mathcal{Y}$$ such that
\begin{equation}\label{eq:realization}
F(z)=D+C(\mathbf{P}(z)\otimes
I_\mathcal{X})\Big(I-A(\mathbf{P}(z)\otimes
I_\mathcal{X})\Big)^{-1}B.
\end{equation}

If the Hilbert spaces $\mathcal{U}$ and $\mathcal{Y}$ are
finite-dimensional, $F$ can be treated as a matrix-valued function
(relative to a pair of orthonormal bases for $\mathcal{U}$ and
$\mathcal{Y}$). It is natural to ask whether every rational
$\alpha\times\beta$ matrix-valued function in the Schur--Agler
class
$\mathcal{SA}_\mathbf{P}(\mathbb{C}^\beta,\mathbb{C}^\alpha)$ has
a realization \eqref{eq:realization} with a contractive
colligation matrix $\left[
\begin{smallmatrix} A & B \\ C & D
\end{smallmatrix}\right]$. This question is open for $d>1$, except for the following two cases.
The first case is when $F$ is an inner (i.e., regular on
$\mathbb{D}^d$ and taking unitary boundary values a.e. on the unit
torus $\mathbb{T}^d=\{z=(z_1,\ldots,z_d)\in\mathbb{C}^d\colon
|z_i|=1,\ i=1,\ldots,d\}$) matrix-valued Schur--Agler function on
$\mathbb{D}^d$. In this case, the colligation matrix for the
realization \eqref{eq:realization} can be chosen unitary; see
\cite{Knese2011} for the scalar-valued case, and \cite[Theorem
2.1]{BK} for the matrix-valued generalization. We notice here that
not every rational inner function is Schur--Agler; see
\cite[Example 5.1]{GKVW} for a counterexample. In the second case,
one assumes that $\mathbf{P}=\bigoplus_{r=1}^k\mathbf{P}^{(r)}$
satisfies a certain matrix-valued Archimedean condition and $F$ is
regular on the closed domain $\overline{\mathcal{D}}_\mathbf{P}$
and satisfies $\|F\|_{\mathcal{A},\mathbf{P}}<1$. Then there exists a contractive finite-dimensional realization of $F$ in the
form
\begin{equation}\label{eq:P-realiz}
F = D +C\mathbf{P}_n(I-A\mathbf{P}_n)^{-1}B,\qquad
\mathbf{P}_n=\bigoplus_{r=1}^k(\mathbf{P}^{(r)}\otimes I_{n_r}),
\end{equation}
with some $n=(n_1,\ldots,n_k)\in\mathbb{Z}_+^k$ \cite{GKVVW1}.

The second component in the proof of our main result, a theory of
noncommutative rational functions, is briefly summarized in
Section \ref{sec:Append}. Then a version of a theorem from
\cite{KV2009} on the singularities of a noncommutative rational
matrix-valued function in terms of its minimal realization, where the realization is in the form of a structured noncommutative multidimensional system, i.e., the one that is associated with a unit polyball \eqref{eq:polyball}, is proved (see \cite{BGM} for
details on structured noncommutative multidimensional systems). As a corollary,
an analogous theorem on the singularities of a commutative matrix-valued
rational function is obtained via a noncommutative lifting.

In Section \ref{sec:Detrep}, our main theorem is proved, which
establishes the existence of a strictly contractive determinantal
representation for every irreducible strongly stable polynomial on
a matrix polyball. As a corollary, in the case of the unit
polydisk $\mathbb{D}^d$, we obtain that every strongly stable
polynomial $p$ is an eventual Agler denominator, i.e., is the
denominator of a rational inner function of the Schur--Agler
class.

\section{Singularities of noncommutative rational functions and minimal structured noncommutative multidimensional
systems} \label{sec:Append}

We first give some necessary background on matrix-valued
noncommutative rational functions; see \cite{KV2009,KV2012} for
more details, and we also refer to \cite{KVV} for a general theory
of free noncommutative functions.

A matrix-valued noncommutative rational expression $R$ over a
field $\mathbb{K}$ is any expression obtained from noncommuting
indeterminates $z_1$, \ldots, $z_d$, and a constant
$1\in\mathbb{K}$ by successive elementary operations: addition,
multiplication, and inversion, forming (block) matrices, and also
matrix addition, multiplication, and inversion. E.g., an
$\alpha\times\beta$ matrix-valued noncommutative polynomial
$$R=\sum_{w\in\mathcal{G}_d\colon |w|\le L}R_wz^w$$ is a matrix-valued
noncommutative rational expression defined without using
inversions. Here $\mathcal{G}_d$ is the free monoid on $d$
generators (letters) $g_1$, \ldots, $g_d$, the coefficients $R_w$
are $\alpha\times \beta$ matrices over $\mathbb{K}$, and for an
element $w=g_{i_1}\cdots g_{i_N}\in\mathcal{G}_d$ (a word in the
alphabet $g_1$, \ldots, $g_d$) we set $z^w=z_{i_1}\cdots z_{i_N}$
and $z^\emptyset=1$, where $\emptyset$ is the unit element of
$\mathcal{G}_d$ (the empty word), and $|w|=N$ is the length of the
word $w$, in particular $|\emptyset|=0$.

 For a $d$-tuple $Z=(Z_1,\ldots,Z_d)$
of $s\times s$ matrices over $\mathbb{K}$, one can evaluate
$$R(Z)=R_s(Z)=\sum_{w\in\mathcal{G}_d}R_w\otimes Z^w,$$
where $Z^w=Z_{i_1}\cdots Z_{i_k}$ and $Z^\emptyset =I_s$.
Similarly, one can evaluate $R$ on a $d$-tuple
$X=(X_1,\ldots,X_d)$ of generic $s\times s$ matrices, i.e., on a
$d$-tuple of matrices over commuting indeterminates $(X_r)_{ij}$,
$r=1,\ldots,d$, $i,j=1,\ldots,s$. We then define evaluations
$R(Z)=R_s(Z)$ and $R(X)=R_s(X)$ whenever all the formal matrix
inversions in the expression $R$ can be replaced by matrix
inversions for matrices over $\mathbb{K}$ (resp., for generic
matrices); this defines the domain of regularity of  $R$, ${\rm
dom}_sR$, and the extended domain of regularity of $R$, ${\rm
edom}_sR$, inside the set of $d$-tuples of $s\times s$ matrices
over $\mathbb{K}$ ($d$-tuples of generic matrices). Then one
defines ${\rm dom}\,R=\coprod_{s=1}^\infty {\rm dom}_sR$ and ${\rm
edom}\,R=\coprod_{s=1}^\infty {\rm edom}_sR$. One has
$${\rm edom}_s{R}\supseteq{\rm
dom}_s{R},\quad {\rm edom\,}{R}\supseteq{\rm dom\,}{R}.$$

Two $\alpha\times \beta$ matrix-valued noncommutative rational
expressions $R_1$ and $R_2$ are called equivalent if $\dom\,
R_1\cap\dom\,R_2\neq\emptyset$ and $R_1(Z)=R_2(Z)$ for every $Z\in
\dom\, R_1\cap\dom\,R_2$. An equivalence class of $\alpha\times
\beta$ matrix-valued noncommutative rational expressions is called
an $\alpha\times \beta$ matrix-valued noncommutative rational
function. We write $R\in\mathfrak{R}$ if a matrix-valued
noncommutative rational function $\mathfrak{R}$ as an equivalence
class of matrix-valued noncommutative rational expressions
contains $R$. We define
$${\rm dom}_s\,\mathfrak{R}=\bigcap_{R\in\mathfrak{R}}{\rm
dom}_sR,\quad {\rm dom\,}\mathfrak{R}=\coprod_{s=1}^\infty {\rm
dom}_s\mathfrak{R}.$$ Next, we observe that if $R_1$ and $R_2$ are
equivalent, then their evaluations on generic matrices give rise
to the same  $\alpha\times \beta$ matrix-valued commutative
rational function, so ${\rm edom}_sR_1={\rm edom}_sR_2$ for every
$s$. Therefore, we can define $${\rm edom}_s\mathfrak{R}={\rm
edom}_sR,\quad {\rm edom\,}\mathfrak{R}={\rm edom}\,R$$ for any
$R\in\mathfrak{R}$. Clearly, we have
$${\rm edom}_s\mathfrak{R}\supseteq{\rm
dom}_s\mathfrak{R},\quad {\rm edom\,}\mathfrak{R}\supseteq{\rm
dom\,}\mathfrak{R}.$$

In \cite{KV2009}, the left and right backward shift operators
$\mathcal{L}_j$ and $\mathcal{R}_j$, $j=1,\ldots,d$, were defined
for matrix-valued noncommutative rational expressions. It was
shown that if $R_1$ and $R_2$ are equivalent, then so are
$\mathcal{L}_j(R_1)$ and $\mathcal{L}_j(R_2)$ (resp.,
$\mathcal{R}_j(R_1)$ and $\mathcal{R}_j(R_2)$). Therefore, these
definitions can be extended to matrix-valued noncommutative rational functions.
One defines $${\rm dom}\,\mathcal{L}_j(R)={\rm
dom}\,\mathcal{R}_j(R)={\rm dom}\,R,$$ however we have
$${\rm
edom}\,\mathcal{L}_j(R)\supseteq {\rm edom}\,R,\quad {\rm
edom}\,\mathcal{R}_j(R)\supseteq {\rm edom}\,R,$$ and therefore
$${\rm
edom}\,\mathcal{L}_j(\mathfrak{R})\supseteq {\rm
edom}\,\mathfrak{R},\quad {\rm
edom}\,\mathcal{R}_j(\mathfrak{R})\supseteq {\rm
edom}\,\mathfrak{R}.$$ We will not need the general definitions of
the left and right backward shifts here. It suffices for us to use
the fact that every matrix-valued noncommutative rational
function $\mathfrak{R}$ which is regular at $0$, i.e., such that
$0\in{\rm dom}_1\mathfrak{R}$, has a formal power series expansion
$$\mathfrak{R}\sim\sum_{w\in\mathcal{G}_d}\mathfrak{R}_wz^w,$$
whose evaluation on $s\times s$ matrices is convergent in some
neighborhood of zero for each $s$, and that
$$\mathcal{L}_j\mathfrak{R}\sim\sum_{w\in\mathcal{G}_d}\mathfrak{R}_{g_jw}z^w,\quad
\mathcal{R}_j\mathfrak{R}\sim\sum_{w\in\mathcal{G}_d}\mathfrak{R}_{wg_j}z^w.$$

The following theorem is a structured-system analogue of
\cite[Theorem 3.1]{KV2009}; for details on
structured noncommutative multidimensional systems, see \cite{BGM}. We note that we are not using here a bipartite-graph formalism adopted in \cite{BGM} for system evolutions and, as a consequence, for the definitions of controllability and observability. Instead, we use more direct block-matrix notations. The diligent reader can easily find the one-to-one correspondence between the two formalisms.

\begin{thm}\label{thm:structured-sing}
Let $\mathfrak{R}$ be an $\alpha\times\beta$ matrix-valued
noncommutative rational function over a field $\mathbb{K}$
represented by the expression \begin{equation}\label{eq:R} R=
D+Cz_n(I-Az_n)^{-1}B,\end{equation} where
$n=(n_1,\ldots,n_k)\in\mathbb{Z}_+^k$,
$z_n=\bigoplus_{r=1}^k(z^{(r)}\otimes I_{n_r})$,
$z^{(r)}=[z^{(r)}_{ij}]$ is a $\ell_r\times m_r$ matrix whose
entries $z^{(r)}_{ij}$ are noncommuting
indeterminates, $\left[\begin{smallmatrix} A & B\\
C & D
\end{smallmatrix}\right]\in\mathbb{K}^{(\sum_{r=1}^km_rn_r+\alpha)\times (\sum_{r=1}^k\ell_r
n_r+\beta)}$ is a block matrix whose blocks $A,B,C$  have further
block decompositions\footnote{Here, similarly to the convention we
made in a footnote on the front page of the paper, we assume that
a matrix block is void if the number of its rows/columns is $0$.}:
$A=[A^{(rr')}]_{r,r'=1,\ldots,k}$ with blocks
$A^{(rr')}\in\mathbb{K}^{m_r\times\ell_{r'}}\otimes\mathbb{K}^{n_r\times
n_{r'}} \cong(\mathbb{K}^{n_r\times
n_{r'}})^{m_r\times\ell_{r'}}$, so that for $j=1,\ldots,m_r$,
$i=1,\ldots,\ell_{r'}$ one has
$A^{(rr')}_{ji}\in\mathbb{K}^{n_r\times n_{r'}}$;
$B=\col_{r=1,\ldots, k}[B^{(r)}]$ with blocks
$B^{(r)}\in\mathbb{K}^{m_r\times
1}\otimes\mathbb{K}^{n_r\times\beta}\cong(\mathbb{K}^{n_r\times\beta})^{m_r\times
1}$, so that for $j=1,\ldots,m_r$ one has
$B_j^{(r)}\in\mathbb{K}^{n_r\times\beta}$;
 $C=\row_{i=1,\ldots,k}[C^{(r)}]$
with blocks $C^{(r)}\in\mathbb{K}^{1\times
\ell_r}\otimes\mathbb{K}^{\alpha\times
n_r}\cong(\mathbb{K}^{\alpha\times n_r})^{1 \times\ell_r}$, so
that for $i=1,\ldots,\ell_r$ one has
$C_i^{(r)}\in\mathbb{K}^{\alpha\times n_r}$; and
$D\in\mathbb{K}^{\alpha\times\beta}$. Assume that the realization
$R$ of $\mathfrak{R}$ as in \eqref{eq:R} is minimal, or
equivalently, controllable, i.e., for each $r_0\in\{1,\ldots,k\}$
and $j_0\in\{1,\ldots,m_{r_0}\}$ one has
\begin{equation}\label{controll}
\sum\limits_{N\in\mathbb{N},\,\gamma\in\{1,\ldots,N\},\,i_\gamma\in\{1,\ldots,\ell_{r_\gamma}\},\,
j_\gamma\in\{1,\ldots,m_{r_\gamma}\}}\range (A^{(r_0r_1)}_{j_0i_1}\cdots
A^{(r_{N-1}r_N)}_{j_{N-1}i_N}B^{(r_N)}_{j_N})=\mathbb{K}^{n_{r_0}},
\end{equation}
 and observable, i.e., for each $r_0\in\{1,\ldots,k\}$ and $i_0\in\{1,\ldots,\ell_{r_0}\}$ one has
\begin{equation}\label{observ}
\bigcap_{N\in\mathbb{N},\,\gamma\in\{1,\ldots,N\},\,i_\gamma\in\{1,\ldots,\ell_{r_\gamma}\},\,
j_\gamma\in\{1,\ldots,m_{r_\gamma}\} }\ker(C^{(r_N)}_{i_N}A^{(r_Nr_{N-1})}_{j_Ni_{N-1}}\cdots
A^{(r_1r_0)}_{j_1i_0})=\{0\}.
\end{equation} Then
\begin{multline}\label{eq:edom}
{\rm edom\,}\mathfrak{R}={\rm
dom\,}\mathfrak{R}=\coprod_{s=1}^\infty\Big\{Z=(Z^{(1)},\ldots,Z^{(k)})\in(\mathbb{K}^{s\times
s})^{\ell_1\times m_1}\times\cdots\times(\mathbb{K}^{s\times s})^{\ell_k\times m_k}\\
\cong(\mathbb{K}^{\ell_1\times
m_1}\times\cdots\times\mathbb{K}^{\ell_k\times m_k})\otimes\mathbb{K}^{s\times s}\colon\det(I-A\odot Z)\neq 0\Big\},
\end{multline}
where $A\odot Z\in\mathbb{K}^{ \sum_{r=1}^km_rn_rs\ \times \sum_{r=1}^km_rn_rs}$ is a  block
$\sum_{r=1}^km_r\times \sum_{r=1}^km_r$ matrix with blocks
$$(A\odot Z)^{(rr')}_{ij}=\sum_{\kappa=1}^{\ell_{r'}} A^{(rr')}_{i\kappa}\otimes Z^{(r')}_{\kappa j}\in\mathbb{K}^{n_r\times n_{r'}}\otimes\mathbb{K}^{s\times s}\cong
\mathbb{K}^{ n_rs\times n_{r'}s},$$
$i=1,\ldots,m_r$, $j=1,\ldots,m_{r'}$.
\end{thm}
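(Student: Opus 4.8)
The result is the structured-system analogue of \cite[Theorem 3.1]{KV2009}, and the plan is to follow that proof, the genuinely new work being (a) the bookkeeping forced by the block structure of $z_n,A,B,C$ and (b) the translation of \eqref{controll}--\eqref{observ} into the minimality of this structure. First I would expand the realization \eqref{eq:R} into its formal power series about $0$: writing $C=\row_{r}[C^{(r)}]$, $B=\col_{r}[B^{(r)}]$, $A=[A^{(rr')}]$, and expanding each factor $z_n=\bigoplus_r(z^{(r)}\otimes I_{n_r})$ into its scalar entries $z^{(r)}_{ij}$, the geometric series $Cz_n(I-Az_n)^{-1}B=\sum_{N\ge 0}Cz_n(Az_n)^{N}B$ shows that $\mathfrak{R}_\emptyset=D$ and that the coefficient of a word $z^{(r_1)}_{i_1j_1}\cdots z^{(r_{N+1})}_{i_{N+1}j_{N+1}}$ is precisely a product of blocks $C^{(r)}_i$, $A^{(rr')}_{ji}$, $B^{(r)}_j$ of the shape appearing in \eqref{controll}--\eqref{observ} (the superblock indices being forced by block compatibility of the matrix products). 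Hence \eqref{controll} says exactly that the reachable subspace of the realization is the whole state space and \eqref{observ} that the unobservable subspace is $\{0\}$: the realization \eqref{eq:R} is minimal, in the sense of structured noncommutative systems \cite{BGM}, precisely under these two conditions. I would also record the elementary fact that evaluating $Az_n$ at a tuple $Z$ as in \eqref{eq:edom} yields precisely $A\odot Z$ (after the displayed matching of tensor factors): the single formal inversion in \eqref{eq:R} can then be performed at $Z$ over $\mathbb{K}$ iff $\det(I-A\odot Z)\ne 0$, so $\dom_s R=\{Z:\det(I-A\odot Z)\ne 0\}$, and on generic $s\times s$ matrices $\det(I-A\odot X)\not\equiv 0$ since it equals $1$ at $X=0$.

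The inclusion $\dom\mathfrak{R}\subseteq\edom\mathfrak{R}$ holds in general, and $\{Z:\det(I-A\odot Z)\ne 0\}\subseteq\dom\mathfrak{R}$ is immediate once Paragraph 1 is known, since $R$ itself represents $\mathfrak{R}$ and is regular at every such $Z$; so everything reduces to the inclusion $\edom\mathfrak{R}\subseteq\{Z:\det(I-A\odot Z)\ne 0\}$, i.e. to the statement that a minimal structured realization has no pole--zero cancellation. For this I would transcribe the argument of \cite{KV2009}: using that $\edom\mathfrak{R}=\edom R$, fix a size $s$ and an irreducible factor $q$ of the polynomial $\det(I-A\odot X)$ in the entries of a generic $s$-tuple $X$, and show that $q$ divides the reduced common denominator of the commutative rational matrix function $X\mapsto\mathfrak{R}(X)$. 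Observability lets me recover, from $\mathfrak{R}$ together with finitely many of its iterated left backward shifts $\mathcal{L}$ (indexed by the scalar indeterminates $z^{(r)}_{ij}$), the row functions $C^{(r)}_i(I-A\odot X)^{-1}$ on the reachable subspace; controllability and the right shifts $\mathcal{R}$ similarly recover the column functions $(I-A\odot X)^{-1}B^{(r)}_j$ on the observable subspace; by minimality there is no proper invariant complement, so every pole of $(I-A\odot X)^{-1}$ is a pole of $\mathfrak{R}$ or of one of finitely many of its backward shifts. Since $\edom\,\mathcal{L}_j(\mathfrak{R})\supseteq\edom\,\mathfrak{R}$ and $\edom\,\mathcal{R}_j(\mathfrak{R})\supseteq\edom\,\mathfrak{R}$ (as recalled in the excerpt), backward shifts can only enlarge the extended domain, and a multiplicity/localization count as in \cite{KV2009} forces $q$ to occur already, to the full multiplicity with which it can appear in any entry of $(I-A\odot X)^{-1}$, in the reduced denominator of $\mathfrak{R}$ itself, i.e. $q$ vanishes off $\edom_s\mathfrak{R}$.

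Combining the inclusions gives $\{Z:\det(I-A\odot Z)\ne 0\}=\dom_s R\subseteq\dom\mathfrak{R}\subseteq\edom\mathfrak{R}\subseteq\{Z:\det(I-A\odot Z)\ne 0\}$, so all three sets coincide, which is \eqref{eq:edom}. The step I expect to be the main obstacle is the one in the previous paragraph: reconstructing the internal transfer function $(I-A\odot X)^{-1}$ from $\mathfrak{R}$ and its backward shifts, and thereby ruling out cancellation, in the structured setting --- where the single letter of \cite{KV2009} is replaced by the whole indeterminate matrix $z^{(r)}$, so that one must keep careful track of the $\ell_r,m_r,n_r$ indices in \eqref{controll}--\eqref{observ} and of the several tensor-factor identifications hidden in the definition of $A\odot Z$.
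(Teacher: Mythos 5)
Your plan is essentially the paper's proof (which in turn transcribes \cite[Theorem 3.1]{KV2009} to the structured setting): the two easy inclusions are dispatched as you say, and the hard inclusion $\edom\,\mathfrak{R}\subseteq\{Z:\det(I-A\odot Z)\neq0\}$ is obtained by applying left and right backward shifts and invoking \eqref{controll}--\eqref{observ} to reconstruct the resolvent. Two remarks on the execution. First, you have the pairing backwards: in the paper the \emph{right} shifts $\rs^w$ applied to $R$ produce $C(I-z_nA)^{-1}\bigl(\mathbf{e}^{(r_0)}_{i_0}\otimes A^{(r_0r_1)}_{j_0i_1}\cdots A^{(r_{N-1}r_N)}_{j_{N-1}i_N}B^{(r_N)}_{j_N}\bigr)$, so it is \emph{controllability} (the ranges of those columns filling $\mathbb{K}^{n_{r_0}}$) that strips off $B$ and recovers the row-type function $C(I-z_nA)^{-1}$, hence regularity of $R'=Cz_n(I-Az_n)^{-1}$ at $Z$; then the \emph{left} shifts $\ls^{w^\top}$ applied to $R'$ produce $\bigl((\mathbf{f}^{(r_0)}_{j_0})^\top\otimes C^{(r_N)}_{i_N}A^{(r_Nr_{N-1})}_{j_Ni_{N-1}}\cdots A^{(r_1r_0)}_{j_1i_0}\bigr)(I-Az_n)^{-1}$, and it is \emph{observability} (trivial common kernel of those rows) that strips off $C$ and recovers $(I-Az_n)^{-1}$ itself --- the opposite of your attribution of observability to $C^{(r)}_i(I-A\odot X)^{-1}$ and controllability to $(I-A\odot X)^{-1}B^{(r)}_j$. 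Second, no irreducible-factor, multiplicity, or localization bookkeeping is needed: the argument runs pointwise at a fixed $Z\in\edom_s\mathfrak{R}$, using only $\edom_s\rs^w\mathfrak{R}\supseteq\edom_s\mathfrak{R}$ (and likewise for $\ls$) together with the intertwining $z_n(I-Az_n)^{-1}=(I-z_nA)^{-1}z_n$, and once $(I-A\odot X)^{-1}$ is known to be regular at $X=Z$ one simply takes determinants, $\det\bigl((I-A\odot X)^{-1}\bigr)=(\det(I-A\odot X))^{-1}$, to conclude $\det(I-A\odot Z)\neq0$.
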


\begin{proof} It is clear that the inclusion ``$\supseteq$" holds in both the equalities
in \eqref{eq:edom}.

Conversely, let $Z\in \edom_s\mathfrak{R}$ for some $s\in\mathbb{N}$. We will show
that $\det(I -A\odot Z)\neq 0.$ Let
$$\ls=(\ls^{(r)}_{ij})_{r=1,\ldots,k,\,i=1,\ldots,\ell_r,\,j=1,\ldots,m_r},\quad
\rs=(\rs^{(r)}_{ij})_{i=1,\ldots,\ell_r,\,j=1,\ldots,m_r}$$ be the $d$-tuples of left and right backward shifts, where $d=\sum_{r=1}^k\ell_rm_r$. For a word
$w=g^{(r_0)}_{i_0j_0}\cdots g^{(r_N)}_{i_Nj_N}\in\free_d$, we set
$\ls^w=\ls^{(r_0)}_{i_0j_0}\cdots\ls^{(r_N)}_{i_Nj_N}$,
$\rs^w=\rs^{(r_0)}_{i_0j_0}\cdots\rs^{(r_N)}_{i_Nj_N}$. Then for any
$w=g^{(r_0)}_{i_0j_0}\cdots g^{(r_N)}_{i_Nj_N}\in\free_d$ with $N\ge 1$
we obtain \begin{multline*} \rs^w(R)=\rs^w\Big(Cz_n(I-Az_n)^{-1}B\Big)\\
=\rs^w\Big(C(I-z_nA)^{-1}z_nB\Big)
=C(I-z_nA)^{-1}\Big(\mathbf{e}^{(r_0)}_{i_0}\otimes
A^{(r_0r_1)}_{j_0i_1}\cdots
A^{(r_{N-1}r_N)}_{j_{N-1}i_N}B^{(r_N)}_{j_N}\Big),\end{multline*} where $\mathbf{e}^{(r_0)}_{i_0}$ is
the $\Big(\sum_{r=1}^{r_0-1}\ell_r+i_0\Big)$-th standard basis vector of $\mathbb{K}^{\sum_{r=1}^k\ell_r}$. Since
$Z\in\edom_s\mathfrak{R}$, we have
$Z\in\edom_s\rs^w\mathfrak{R}=\edom_s\rs^w(R)$. Therefore, the
$\alpha s\times\beta s$ matrix-valued rational function
$$(C\otimes I_s)(I-X\odot_{\rm op} A)^{-1}\Big(\mathbf{e}^{(r_0)}_{i_0}\otimes A^{(r_0r_1)}_{j_0i_1}\cdots
A^{(r_{N-1}r_N)}_{j_{N-1}i_N}B^{(r_N)}_{j_N}\otimes I_s\Big)$$ in the commuting variables
$(X^{(r)}_{ij})_{\mu\nu}$, $r=1,\ldots,k$, $i=1,\ldots,\ell_r$, $j=1,\ldots,m_r$,
$\mu,\nu=1,\ldots,s$,
  is regular at $X=Z$, where $X\odot_{\rm op}
  A\in\mat{\mathbb{K}}{\sum_{r=1}^k\ell_rn_rs}$ is a block $\sum_{r=1}^k\ell_r\times\sum_{r=1}^k\ell_r$ matrix with blocks
  $$(X\odot_{\rm op} A)^{(rr')}_{ij}=\sum_{\kappa=1}^{m_r}A^{(rr')}_{\kappa j}\otimes
  X^{(r)}_{i\kappa}\in\rmat{\mathbb{K}}{n_r}{n_{r'}}\otimes\mat{\mathbb{K}}{s}\cong\rmat{\mathbb{K}}{n_rs}{n_{r'}s}.$$
 The controllability assumption implies that the $\alpha s\times \sum_{r=1}^k\ell_r n_rs$ matrix-valued
rational function $(C\otimes I_s)(I-X\odot_{\rm op} A)^{-1}$ is
regular at $X=Z$. Therefore, the $\alpha s\times \sum_{r=1}^km_rn_rs$
matrix-valued rational function $$(C\otimes I_s)(I-X\odot_{\rm op}
A)^{-1}(X\odot_{\rm op}I_{\sum_{r=1}^km_rn_r})=(C\odot X)(I-A\odot X)^{-1}$$ is regular at
$X=Z$. In other words, $Z\in \edom_sR'$ where
$$R'=Cz_n(I-Az_n)^{-1}$$ is an $\alpha\times \sum_{r=1}^km_rn_r$
matrix-valued noncommutative rational expression.

Next, for any $w=g^{(r_0)}_{i_0j_0}\cdots g^{(r_n)}_{i_Nj_N}\in\free_d$
with $N\ge 1$ we set $w^\top=g^{(r_N)}_{i_Nj_N}\cdots g^{(r_0)}_{i_0j_0}$. Then
we have
$$\ls^{w^\top}(R')=\Big({(\mathbf{f}^{(r_0)}_{j_0})}^\top\otimes C^{(r_N)}_{i_N}A^{(r_Nr_{N-1})}_{j_Ni_{N-1}}\cdots
A^{(r_1r_0)}_{j_1i_0}\Big) (I-Az_n)^{-1},$$ where
$\mathbf{f}^{(r_0)}_{j_0}$ is the $\Big(\sum_{r=1}^{r_0-1}m_r+j_0\Big)$-th standard basis vector of
$\mathbb{K}^{\sum_{r=1}^km_r}$. Since $Z\in\edom_sR'$, we have
$Z\in\edom_s\ls^{w^\top}(R')$. Therefore, the $\alpha s\times \sum_{r=1}^km_rn_rs$
matrix-valued rational function
$$\Big({(\mathbf{f}^{(r_0)}_{j_0})}^\top\otimes C^{(r_N)}_{i_N}A^{(r_Nr_{N-1})}_{j_Ni_{N-1}}\cdots
A^{(r_1r_0)}_{j_1i_0}\otimes I_s\Big)(I-A\odot X)^{-1}
$$ in the commuting variables
$(X^{(r)}_{ij})_{\mu\nu}$, $r=1,\ldots,k$, $i=1,\ldots,\ell_r$, $j=1,\ldots,m_r$,
$\mu,\nu=1,\ldots,s$,
  is regular at $X=Z$.
 The observability assumption implies that
the $\sum_{r=1}^km_rn_rs\times\sum_{r=1}^km_rn_rs$ matrix-valued rational function
$$ (I-A\odot X)^{-1}
$$ is regular at $X=Z$. Then so is the rational function $$ \det(I-A\odot
  X)^{-1}=(\det(I-A\odot
  X))^{-1},
$$
i.e., $\det(I-A\odot
  Z)\neq 0$, as required.
\end{proof}

\begin{cor}\label{cor:structured-sing}
The variety of singularities of an $\alpha\times \beta$
matrix-valued rational function $f$ which can be represented as a
restriction $R_1$ of
 an $\alpha\times \beta$
matrix-valued noncommutative rational expression $R$ of the form
\eqref{eq:R} satisfying the assumptions of Theorem
\ref{thm:structured-sing} (i.e., which is obtained from $R$ by replacing
the noncommuting indeterminates $z^{(r)}_{ij}$ by the commuting ones),
is given by
\begin{equation*}
\Big\{Z=(Z^{(1)},\ldots,Z^{(k)})\in\mathbb{K}^{\ell_1\times m_1}\times\cdots\times \mathbb{K}^{\ell_k\times m_k}\colon\det (I -AZ_n)=0\Big\},
\end{equation*}
where $Z_n=\bigoplus_{r=1}^k(Z^{(r)}\otimes I_{n_r})$.
\end{cor}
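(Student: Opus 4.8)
The plan is to obtain the corollary as the $s=1$ specialization of Theorem~\ref{thm:structured-sing}. By hypothesis, $f$ is represented by a noncommutative rational expression $R$ of the form~\eqref{eq:R} that is minimal, hence controllable and observable; let $\mathfrak{R}$ be the noncommutative rational function with $R\in\mathfrak{R}$, so that Theorem~\ref{thm:structured-sing} applies verbatim to $\mathfrak{R}$. First I would recall that $f=R_1$ is, by construction, nothing but the matrix-valued commutative rational function in the $d=\sum_{r=1}^{k}\ell_r m_r$ commuting variables $z^{(r)}_{ij}$ obtained by evaluating $R$ on a $d$-tuple of generic $1\times 1$ matrices; equivalently, it is the $s=1$ generic evaluation of $\mathfrak{R}$.

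The key step is then the identification of the variety of singularities of $f$ with the complement of $\edom_1\mathfrak{R}$ in $\mathbb{K}^{\ell_1\times m_1}\times\cdots\times\mathbb{K}^{\ell_k\times m_k}$ (identifying a $1\times 1$ matrix with its single entry). This is essentially the definition of the extended domain of regularity at level $s=1$ applied to the commutative restriction $f$, together with the representative-independence of $\edom_s$ recalled in Section~\ref{sec:Append}, so that $\edom_1\mathfrak{R}=\edom_1 R$. The point that needs a little care — and the only genuinely non-mechanical point in the argument — is that it is $\edom_1$, rather than the possibly smaller naive domain $\dom_1$, that matches the singularity locus: a tuple of scalars is a true singular point of $f$ precisely when $f$, written with its entries in lowest terms, actually has a pole there, which is exactly the condition defining the complement of $\edom_1$; cancellations in the expression $R$ that shrink $\dom_1$ do not create singularities of $f$.

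Finally I would plug in Theorem~\ref{thm:structured-sing}, which gives $\edom_1\mathfrak{R}=\{Z\colon\det(I-A\odot Z)\neq 0\}$, with $Z$ ranging over tuples of $1\times 1$ matrices, and then check the routine identity $A\odot Z=AZ_n$ for $s=1$, where $Z_n=\bigoplus_{r=1}^{k}(Z^{(r)}\otimes I_{n_r})$. Indeed, at $s=1$ each entry $Z^{(r')}_{\kappa j}$ is a scalar and $I_s=1$, so the defining block formula gives $(A\odot Z)^{(rr')}_{ij}=\sum_{\kappa=1}^{\ell_{r'}}A^{(rr')}_{i\kappa}\otimes Z^{(r')}_{\kappa j}=\sum_{\kappa=1}^{\ell_{r'}}Z^{(r')}_{\kappa j}\,A^{(rr')}_{i\kappa}$, which is precisely the $(i,j)$-block of $A^{(rr')}\big(Z^{(r')}\otimes I_{n_{r'}}\big)$, i.e. the $(r,r')$-block of $AZ_n$. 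Since $\det(I-AZ_n)=1$ at $Z=0$, this is a proper hypersurface, and combining everything the variety of singularities of $f$ is exactly $\{Z\colon\det(I-AZ_n)=0\}$, as claimed.
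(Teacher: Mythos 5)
Your proof is correct and takes essentially the same approach as the paper's, which simply observes that the variety of singularities of $f$ equals the complement of $\edom_1 R$ and then invokes Theorem~\ref{thm:structured-sing}. You merely make explicit the two points the paper leaves implicit (why it is $\edom_1$ rather than $\dom_1$ that matches the singular locus, and the identity $A\odot Z=AZ_n$ at $s=1$), both of which you verify correctly.
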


\begin{proof}
Clearly, the variety of singularities of $f$ coincides with
$$\Big(\mathbb{K}^{\ell_1 \times m_1}\times\cdots\times \mathbb{K}^{\ell_k \times m_k}\Big)\setminus\edom_1 R.$$ The result then
follows from Theorem \ref{thm:structured-sing}.
\end{proof}

We will also need to make use of the inverse of a noncommutative rational function, and of the fact that the minimality of a realization carries over to the corresponding realization of the inverse. We recall from
\cite[Section 4]{BGM} that if $\mathfrak{R}$ is an $\alpha\times\alpha$ matrix-valued
noncommutative rational function over a field $\mathbb{K}$
represented by the noncommutative rational expression \eqref{eq:R} with $D$ invertible, then its inverse exists and has a realization
\begin{equation}\label{inv} R^{-1}=
D^\times +C^\times z_n(I-A^\times z_n)^{-1}B^\times,\end{equation} where
\begin{equation}\label{invreal} \begin{bmatrix} A^\times & B^\times \\
C^\times & D^\times
\end{bmatrix} = \begin{bmatrix} A-BD^{-1}C & BD^{-1} \\
-D^{-1}C & D^{-1}
\end{bmatrix}.
\end{equation}

\begin{prop}\label{prop:inv} Assume that the realization
of $R$ in \eqref{eq:R} is minimal and that $D$ is invertible. Then the realization of $R^{-1}$ given via \eqref{inv} and \eqref{invreal} is also minimal. \end{prop}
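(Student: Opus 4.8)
The plan is to derive the minimality of the realization \eqref{inv}--\eqref{invreal} of $R^{-1}$ from that of \eqref{eq:R} by combining two observations. The first is that the passage \eqref{invreal} from a colligation to the colligation of its inverse preserves the structured form \eqref{eq:R} --- the same $k$, the same $\ell_r,m_r$, the same multi-index $n$, the same block decompositions of $A$, $B$, $C$, and feedthrough $D^{-1}$ in place of $D$ --- and is an \emph{involution} on the set of such colligations with invertible feedthrough. The second is the characterization, recalled in Theorem \ref{thm:structured-sing} and established in \cite{BGM}, of minimality of a structured realization of a noncommutative rational function as controllability \eqref{controll} together with observability \eqref{observ}; in particular, a minimal structured realization has the smallest multi-index $n$ among all structured realizations of the given function, while a non-minimal one can be compressed to a structured realization with a strictly smaller multi-index.

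First I would record the elementary bookkeeping behind \eqref{invreal}. The blocks of $\left[\begin{smallmatrix}A^\times & B^\times\\ C^\times & D^\times\end{smallmatrix}\right]$ inherit the sizes and the decompositions of those of $\left[\begin{smallmatrix}A & B\\ C & D\end{smallmatrix}\right]$, with $(A^\times)^{(rr')}_{ji}=A^{(rr')}_{ji}-B^{(r)}_jD^{-1}C^{(r')}_i$, $(B^\times)^{(r)}_j=B^{(r)}_jD^{-1}$, $(C^\times)^{(r)}_i=-D^{-1}C^{(r)}_i$, and $D^\times=D^{-1}$. Since $D^\times$ is invertible with $(D^\times)^{-1}=D$, a one-line computation gives $(A^\times)^\times=A$, $(B^\times)^\times=B$, $(C^\times)^\times=C$, $(D^\times)^\times=D$, so that applying \eqref{invreal} twice returns the original colligation, in agreement with $(\mathfrak{R}^{-1})^{-1}=\mathfrak{R}$. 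Thus \eqref{inv}--\eqref{invreal} is a structured realization of the form \eqref{eq:R}, with the same multi-index $n$, of the noncommutative rational function $\mathfrak{R}^{-1}$.

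Now suppose, towards a contradiction, that this realization of $R^{-1}$ is not minimal. By the characterization above it fails to be controllable or observable, hence by the structured Kalman-type reduction of \cite{BGM} the function $\mathfrak{R}^{-1}$ has a structured realization $\left[\begin{smallmatrix}\widehat A & \widehat B\\ \widehat C & \widehat D\end{smallmatrix}\right]$ of the form \eqref{eq:R} with strictly smaller multi-index $\widehat n\le n$, $\widehat n\ne n$. Since such a reduction proceeds by restricting each $\mathbb{K}^{n_r}$ to a subspace and passing to a quotient, it leaves the feedthrough term untouched, so $\widehat D=D^\times=D^{-1}$ remains invertible. Applying \eqref{invreal} to $\left[\begin{smallmatrix}\widehat A & \widehat B\\ \widehat C & \widehat D\end{smallmatrix}\right]$ and invoking the involution property produces a structured realization of the form \eqref{eq:R}, with multi-index $\widehat n$, of $(\mathfrak{R}^{-1})^{-1}=\mathfrak{R}$ --- contradicting the minimality of \eqref{eq:R}. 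Hence \eqref{inv}--\eqref{invreal} is minimal.

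The point most in need of care is the appeal to \cite{BGM}, namely that a non-minimal structured realization compresses to a strictly smaller \emph{structured} realization with the same block pattern and feedthrough --- the structured-system analogue of passing to the controllable, observable part. One can instead avoid the reduction theorem and argue directly that \eqref{controll} and \eqref{observ} are themselves preserved by \eqref{invreal}: since $D^{-1}$ is invertible, $\range\big((B^\times)^{(r)}_j\big)=\range\big(B^{(r)}_j\big)$ and $\ker\big((C^\times)^{(r)}_i\big)=\ker\big(C^{(r)}_i\big)$, while the perturbation $(A^\times)^{(rr')}_{ji}-A^{(rr')}_{ji}=-B^{(r)}_jD^{-1}C^{(r')}_i$ has range inside $\range\big(B^{(r)}_j\big)$ and vanishes on $\ker\big(C^{(r')}_i\big)$. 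Running the standard ``smallest invariant subspace'' (for controllability) and ``largest invariant subspace'' (for observability) arguments node by node over $r=1,\dots,k$ --- keeping track of all letter-indices $i,j$, so that the relevant tuple of subspaces is invariant under every block $A^{(rr')}_{ji}$ at once --- one finds that the reachability spans of \eqref{controll} and the unobservability kernels of \eqref{observ} coincide for $\left[\begin{smallmatrix}A^\times & B^\times\\ C^\times & D^\times\end{smallmatrix}\right]$ and $\left[\begin{smallmatrix}A & B\\ C & D\end{smallmatrix}\right]$, so that minimality transfers.
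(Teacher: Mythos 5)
Your proposal is correct, and in fact contains two arguments. The second, ``direct'' argument you sketch at the end is essentially the paper's own proof: the authors verify \eqref{controll} and \eqref{observ} for the inverse colligation by observing that $\range B^{\times(r)}_j=\range B^{(r)}_jD^{-1}=\range B^{(r)}_j$ and that the perturbation $A^{\times(rr')}_{ji}-A^{(rr')}_{ji}=-B^{(r)}_jD^{-1}C^{(r')}_i$ contributes nothing new to the running sums of ranges (dually, nothing to the intersections of kernels), so the reachability spans and unobservability kernels of the two colligations coincide term by term. Your primary argument --- the contradiction via the involutive character of \eqref{invreal} together with the structured Kalman-type reduction of \cite{BGM} --- is a genuinely different and softer route: it trades the explicit block computation for three facts from the \cite{BGM} theory (minimality is equivalent to controllability plus observability; every structured realization compresses to a minimal one with the feedthrough preserved; minimal realizations attain the componentwise smallest multi-index, so no realization with $\widehat n\le n$, $\widehat n\ne n$ can coexist with a minimal one of multi-index $n$). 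That route buys conceptual clarity and would generalize to any realization-preserving involution, at the cost of invoking heavier machinery than the two-line range/kernel computation; since the paper already cites \cite[Theorem 7.1]{BGM} elsewhere, the appeal is legitimate, but the direct verification is self-contained and is what the authors chose. Either way your proof is complete; just make sure, if you keep the contradiction version, to state explicitly which uniqueness/reduction statements from \cite{BGM} you are using, since the strict decrease of the multi-index under compression of a non-minimal realization is exactly the point that needs those citations.
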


\begin{proof}
It suffices to verify the controllability and observability for the realization of $R^{-1}$. Notice that the blocks of $A^\times$ and $B^\times$ are $A^{\times(rr')}_{ji} = A^{(rr')}_{ji}-B^{(r)}_j D^{-1} C^{(r')}_i$ and $B^{\times(r)}_j = B^{(r)}_j D^{-1} $, respectively. Thus, for the controllability, we need to check that for each $r_0\in\{1,\ldots,N\}$ and $j_0\in\{1,\ldots,m_{r_0}\}$ one has
\begin{equation}\label{controll2} \sum\limits_{N\in\mathbb{N},\,\gamma\in\{1,\ldots,N\},\,i_\gamma\in\{1,\ldots,\ell_{r_\gamma}\},\
j_\gamma\in\{1,\ldots,m_{r_\gamma}\}}\range (A^{\times(r_0r_1)}_{j_0i_1}\cdots
A^{\times(r_{N-1}r_N)}_{j_{N-1}i_N}B^{\times(r_N)}_{j_N})=\mathbb{K}^{n_{r_0}}.\end{equation}
Clearly, $ \range B^{\times(r)}_j = \range B^{(r)}_j D^{-1} = \range B^{(r)}_j$ for all $r=1,\ldots,k$ and $j=1,\ldots , m_r$. Next,
\begin{multline*}
\range (A^{\times(r_0r_1)}_{j_0i_1} B^{\times(r_1)}_{j_1}) + \range B^{\times(r_0)}_{j_0}\\ =  \range \Big((A^{(r_0r_1)}_{j_0i_1} -B^{(r_0)}_{j_0} D^{-1} C^{(r_1)}_{i_1}) B^{(r_1)}_{j_1}D^{-1}\Big) + \range B^{(r_0)}_{j_0}D^{-1}\\
  =  \range(A^{(r_0r_1)}_{j_0i_1} B^{(r_1)}_{j_1}) + \range B^{(r_0)}_{j_0}.
    \end{multline*}
Continuing this way one sees that the left hand sides of \eqref{controll} and \eqref{controll2} are the same, and thus \eqref{controll2} follows from \eqref{controll}. In a similar way, one shows the observability.
\end{proof}

\section{Contractive determinantal depresentations of stable polynomials on a matrix polyball}\label{sec:Detrep}

The main result of the paper is the following.

\begin{thm}\label{thm:polyball} Let $p$ be an irreducible polynomial in the commuting indeterminates $z^{(r)}_{ij}$, $r=1,\ldots,k$, $i=1,\ldots,\ell_r$, $j=1,\ldots,m_r$,
with  $p(0) = 1$,  which is strongly stable with respect to the matrix polyball
$\rmat{\mathbb{B}}{\ell_1}{m_1}\times\cdots\times\rmat{\mathbb{B}}{\ell_k}{m_k}$. Then there exist
 $n=(n_1,\ldots,n_k)\in{\mathbb Z}_+^k$ and a strict contraction $K \in \rmat{\mathbb{C}}{\sum_{r=1}^km_rn_r}{\sum_{r=1}^k\ell_rn_r}$ so
that \begin{equation}\label{eq:contr-det-repr}
p= \det (I - KZ_n),\end{equation}
where $Z_n=\bigoplus_{r=1}^k(Z^{(r)}\otimes I_{n_r})$ and $Z^{(r)}=[z^{(r)}_{ij}]\in\rmat{\mathbb{C}}{\ell_r}{m_r}$.
        \end{thm}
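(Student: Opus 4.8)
The plan is to build the strictly contractive determinantal representation in two stages: first obtain a rational inner function of the Schur--Agler class whose denominator is $p$, then apply the noncommutative machinery of Section~\ref{sec:Append} to control the degree of that denominator and force the colligation to be strictly contractive. The starting point is the result of \cite{GKVVW1} recalled in the introduction (the case leading to \eqref{eq:P-realiz}): since $p$ is strongly stable on the matrix polyball, $1/p$ is regular on a neighborhood of the closed polyball, and after scaling $F = \varepsilon/p$ lies in the Schur--Agler class $\mathcal{SA}_{\mathbf P}(\mathbb{C},\mathbb{C})$ with $\|F\|_{\mathcal{A},\mathbf P} < 1$ for the choice $\mathbf{P}^{(r)} = Z^{(r)}$; here $\mathbf{P} = \bigoplus_r Z^{(r)}$ satisfies the required Archimedean condition because each factor $\mathbb{B}^{\ell_r\times m_r}$ is a classical Cartan domain of type I and a sum of such is covered by the Archimedean hypothesis. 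Thus $F$ has a finite-dimensional contractive realization $F = D + CZ_n(I - AZ_n)^{-1}B$ with $\|\left[\begin{smallmatrix}A&B\\C&D\end{smallmatrix}\right]\| \le 1$, which we may take strictly contractive by the strict inequality $\|F\|_{\mathcal{A},\mathbf P}<1$ (enlarging the colligation and perturbing as in \cite{GKVVW1} if necessary).

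Next I would pass to the noncommutative lift. Replace the matrix entries $z^{(r)}_{ij}$ by noncommuting indeterminates to obtain a noncommutative rational expression $R = D + Cz_n(I-Az_n)^{-1}B$ whose commutative restriction is $F$. The key normalization is minimality: cut down the realization of $R$ to a minimal (equivalently controllable and observable, in the structured sense of \eqref{controll}--\eqref{observ}) one, which only shrinks the colligation and preserves the bound $\|\left[\begin{smallmatrix}A&B\\C&D\end{smallmatrix}\right]\|<1$; in particular $D$ is a scalar with $|D|<1$, hence invertible. Now apply Corollary~\ref{cor:structured-sing}: the singularity variety of $F$ as a commutative rational function is exactly $\{Z : \det(I - AZ_n) = 0\}$. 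On the other hand $F = \varepsilon/p$ has singularity variety $\{Z : p(Z) = 0\}$, and since $p$ is irreducible with $p(0)=1$, comparing these two descriptions of the same hypersurface forces $\det(I - AZ_n)$ to be a power of $p$ (up to a unit), say $\det(I - AZ_n) = p^{\,t}$ — here I use that $\det(I - AZ_n)$ also vanishes to the appropriate order and that $p(0)=1=\det(I-A\cdot 0)$ pins down the constant.

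To kill the exponent $t$ and land on $p$ itself I would use Proposition~\ref{prop:inv}. The inverse noncommutative rational function $R^{-1}$, i.e.\ the lift of $p/\varepsilon$, has the minimal realization \eqref{inv}--\eqref{invreal} with colligation matrix $\left[\begin{smallmatrix}A-BD^{-1}C&BD^{-1}\\-D^{-1}C&D^{-1}\end{smallmatrix}\right]$; call its $A$-block $A^\times$. Applying Corollary~\ref{cor:structured-sing} to $R^{-1}$ shows that the singularity variety of $p/\varepsilon$ is $\{Z:\det(I - A^\times Z_n)=0\}$, which is empty since $p$ is nonvanishing there — but that is not quite what I want. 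Rather, the cleaner route is: the determinant $\det(I-AZ_n)$, being a power of the irreducible $p$ and equal to the denominator of the minimal realization, must in fact have $t=1$ because the degree of $\det(I - AZ_n)$ in each matrix block $Z^{(r)}$ is at most $n_r$, and minimality together with the inverse realization forces these partial degrees to match those of $p$ exactly (a minimal structured realization of $1/p$ has $n_r$ equal to the partial degree of $p$; an extra factor would make the realization non-minimal, contradicting what we arranged). Setting $K = A$ then gives $p = \det(I - KZ_n)$ with $\|K\| \le \|\left[\begin{smallmatrix}A&B\\C&D\end{smallmatrix}\right]\| < 1$, which is the desired strictly contractive determinantal representation.

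The main obstacle I anticipate is the last paragraph: nailing down $t=1$, i.e.\ showing the minimal structured realization of $1/p$ has $\det(I-AZ_n)$ equal to $p$ and not a proper power. This requires a careful degree/minimality bookkeeping argument — relating the block sizes $n_r$ of a \emph{minimal} structured noncommutative realization to the multi-degree of $p$, and ruling out spurious common factors — and it is here that the full strength of Theorem~\ref{thm:structured-sing} and Proposition~\ref{prop:inv} (minimality passing to the inverse) is genuinely needed. The rest is essentially assembling quoted results: the Schur--Agler realization from \cite{GKVVW1}, the noncommutative lift, and the singularity corollary.
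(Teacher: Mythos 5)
Your setup is right and matches the paper's: realize $\varepsilon/p$ contractively via \cite{GKVVW1}, lift to a noncommutative expression, compress to a minimal structured realization, and bring in Proposition~\ref{prop:inv} and Corollary~\ref{cor:structured-sing}. But the final step has a genuine gap, and you flag it yourself. Your route is to first deduce $\det(I-AZ_n)=p^{\,t}$ from the coincidence of singularity varieties and then argue $t=1$ by ``degree/minimality bookkeeping,'' resting on the claim that a minimal structured realization of $1/p$ must have block sizes $n_r$ equal to the partial degrees of $p$, so that an extra factor of $p$ would contradict minimality. Nothing in the paper (or in \cite{BGM,KV2009}) supports this: in the multivariable structured setting there is no clean identity between minimal state-space dimensions and partial degrees of the denominator of the commutative restriction, and the one-variable intuition (McMillan degree $=$ denominator degree) does not transfer. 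As written, $t=1$ is asserted, not proved.

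The irony is that the observation you set aside (``but that is not quite what I want'') is exactly how the paper closes the argument. Applying Corollary~\ref{cor:structured-sing} to the minimal realization of $R^{-1}$ (minimal by Proposition~\ref{prop:inv}) shows that the singularity set of $p/c$, which is empty, equals $\{\det(I-A^{\times}_{\rm min}Z_{n_{\rm min}})=0\}$, forcing $\det(I-A^{\times}_{\rm min}Z_{n_{\rm min}})\equiv 1$. Then the two Schur-complement factorizations of $\left[\begin{smallmatrix} I-A_{\rm min}Z_{n_{\rm min}} & B_{\rm min}\\ -C_{\rm min}Z_{n_{\rm min}} & D_{\rm min}\end{smallmatrix}\right]$ give
$\tfrac{c}{p}\det(I-A_{\rm min}Z_{n_{\rm min}})=D_{\rm min}\det(I-A^{\times}_{\rm min}Z_{n_{\rm min}})=D_{\rm min}=c$,
hence $p=\det(I-A_{\rm min}Z_{n_{\rm min}})$ directly --- no intermediate $p^{\,t}$ and no degree counting needed. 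A secondary loose end: you wave at ``perturbing as in \cite{GKVVW1}'' to make the colligation strictly contractive; the paper does this cleanly by realizing $g_\rho(z)=g(\rho z)$ for $\rho>1$ and then rescaling $A'=\rho^{-1}A$, $C'=\rho^{-1}C$, which yields $\|A'\|<1$ and survives the compression to a minimal realization. You should adopt that device, since the strict contractivity of the final $K=A_{\rm min}$ is precisely what the theorem demands.
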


    \begin{proof}
Since $p$ has no zeros in the closed unit polyball $\rmat{\overline{\mathbb{B}}}{\ell_1}{m_1}\times\cdots\times\rmat{\overline{\mathbb{B}}}{\ell_k}{m_k}$, we have that $p$
has no zeros in $\rho\rmat{\overline{\mathbb{B}}}{\ell_1}{m_1}\times\cdots\times\rho\rmat{\overline{\mathbb{B}}}{\ell_k}{m_k}$
for some $\rho
>1$ sufficiently close to 1. Thus the rational function
$g=1/p$ is regular on
 $\rho\rmat{\overline{\mathbb{B}}}{\ell_1}{m_1}\times\cdots\times\rho\rmat{\overline{\mathbb{B}}}{\ell_k}{m_k}$, and the
rational function $g_\rho$ defined by $g_\rho(z)=g(\rho z)$ is
regular on $\rmat{\overline{\mathbb{B}}}{\ell_1}{m_1}\times\cdots\times\rmat{\overline{\mathbb{B}}}{\ell_k}{m_k}$. By
\cite[Lemma 3.3]{GKVVW1}, $\| g_\rho \|_{{\mathcal A}, Z} <
\infty$, where the corresponding Agler norm $\|\cdot\|_{{\mathcal A},Z}=\|\cdot\|_{{\mathcal A},P}$ is defined as in \eqref{eq:Agler-norm} with $\mathbf{P}=Z=\bigoplus_{r=1}^kZ^{(r)}$. Thus we
 can find a constant $c>0$ so that  $\|cg_\rho\|_{{\mathcal A}, Z} <1$.
By \cite[Theorem 3.4]{GKVVW1} applied
 to $F=cg_\rho$, we obtain a $n=(n_1,\ldots,n_k)\in\mathbb{Z}_+^k$ and a
 contractive colligation matrix $\left[\begin{smallmatrix} A & B \\ C & D \end{smallmatrix}\right]$ of size $(\sum_{r=1}^k{m_rn_r}+1)\times(\sum_{r=1}^k\ell_rn_r+1)$ such that
$$cg_\rho=D+CZ_n(I-AZ_n)^{-1}B,\quad Z_n=\bigoplus_{r=1}^k(Z^{(r)}\otimes I_{n_r}).$$
 Therefore $$
cg=D+C(\rho^{-1}Z_n)(I-A(\rho^{-1}Z_n))^{-1}B
=D+\rho^{-1}CZ_n(I-\rho^{-1}AZ_n)^{-1}B.
$$
 Then
we lift the rational function $cg$ to a noncommutative rational
expression using the same realization formula,
$$D+C'z_n(I-A'z_n))^{-1}B,$$
now with $z_n=\bigoplus_{r=1}^k(z^{(r)}\otimes I_{n_r})$ and the entries $z^{(r)}_{ij}$ of
 matrices $z^{(r)}$ being noncommuting
indeterminates, $r=1,\ldots,k$, $i=1,\ldots,\ell_r$, $j=1,\ldots,m_r$, and
$A'=\rho^{-1}A$, $C'=\rho^{-1}C$ (cf. \eqref{eq:R}). Notice that the
colligation matrix
 $\left[\begin{smallmatrix} A' & B
\\ C' & D
\end{smallmatrix}\right]$ is contractive, with $\|A'\|<1$ and $\|C'\|<1$.
 Compressing the underlying noncommutative structured system
 to a minimal one
 (see \cite[Theorem 7.1]{BGM}), we obtain a noncommutative rational expression
 $$R= D_{\rm min}+C_{\rm min}z_{n_{\rm min}} \Big(I-A_{\rm min}z_{n_{\rm
min}}\Big)^{-1}B_{\rm min},$$ whose colligation matrix
 $\left[\begin{smallmatrix} A_{\rm min} & B_{\rm min}
\\ C_{\rm min} & D_{\rm min}
\end{smallmatrix}\right]$ is still contractive and such that $\|A_{\rm min}\|<1$ and $\|C_{\rm min}\|<1$.
 By Proposition \ref{prop:inv}, we also obtain a minimal noncommutative structured system realization of $R^{-1}$,
$$R^{-1}=D^\times_{\rm min}+C^\times_{\rm min}z_{n_{\rm min}} \Big(I-A^\times_{\rm min}z_{n_{\rm
min}}\Big)^{-1}B^\times_{\rm min},$$
with the colligation matrix
\begin{equation*} \begin{bmatrix} A^\times_{\rm min} & B^\times_{\rm min} \\
C^\times_{\rm min} & D^\times_{\rm min}
\end{bmatrix} = \begin{bmatrix} A_{\rm min}-B_{\rm min}D_{\rm min}^{-1}C_{\rm min} & B_{\rm min}D_{\rm min}^{-1} \\
-D_{\rm min}^{-1}C_{\rm min} & D_{\rm min}^{-1}
\end{bmatrix}
\end{equation*}
(cf., \eqref{inv}--\eqref{invreal}).
Applying  Theorem \ref{thm:structured-sing} to $R^{-1}$ and Corollary
\ref{cor:structured-sing} to $R^{-1}_1=p/c$, we obtain that the singularity
set of the polynomial $p/c$ (which is the empty set), agrees
with
$$ \Big\{ Z=(Z^{(1)},\ldots,Z^{(k)})\in\rmat{\mathbb{C}}{\ell_{r_1}}{m_{r_1}}
\times\cdots\times\rmat{\mathbb{C}}{\ell_{r_k}}{m_{r_k}}  \colon
\det(I-A^\times_{\rm min}Z_{n_{\rm min}})
 = 0\Big\}, $$
which is possible only if $\det(I-A^\times_{\rm min}Z_{n_{\rm
min}})\equiv 1$.

Next, from the following two factorizations,
\begin{multline*}
\begin{bmatrix}
I-A_{\rm min}Z_{n_{\rm
min}} & B_{\rm min}\\
-C_{\rm min}Z_{n_{\rm min}} & D_{\rm min}
\end{bmatrix}\\
=\begin{bmatrix}
I & 0\\
-C_{\rm min}Z_{n_{\rm min}}(I-A_{\rm min}Z_{n_{\rm min}})^{-1}  &
I
\end{bmatrix}\begin{bmatrix}
I-A_{\rm min}Z_{n_{\rm
min}} & 0\\
0 & c/p
\end{bmatrix}\begin{bmatrix}
I & (I-A_{\rm min}Z_{n_{\rm
min}})^{-1} B\\
0 & I
\end{bmatrix} \\
=\begin{bmatrix}
I & B^\times_{\rm min}\\
0 & I
\end{bmatrix}\begin{bmatrix}
I-A^\times_{\rm min} Z_{n_{\rm
min}} & 0\\
0 & D_{\rm min}
\end{bmatrix}\begin{bmatrix}
I & 0\\
C^\times_{\rm min}Z_{n_{\rm min}} & I
\end{bmatrix},
\end{multline*}
we obtain that \begin{multline*}
\det\begin{bmatrix} I-A_{\rm
min}Z_{n_{\rm
min}} & B_{\rm min}\\
-C_{\rm min}Z_{n_{\rm min}} & D_{\rm min}
\end{bmatrix}=\frac{c}{p}\det(I-A_{\rm min}Z_{n_{\rm
min}})=D_{\rm min}\det(I-A^\times_{\rm min} Z_{n_{\rm
min}})\\
=D_{\rm min}=\frac{c}{p(0)}=c. \end{multline*} Therefore,
$p=\det(I-A_{\rm min}Z_{n_{\rm min}})$. Since $A_{\rm min}$ is a
strict contraction, we obtain that \eqref{eq:contr-det-repr} is
true with $K=A_{\rm min}$ and $n_{\rm min}$ in the place of $n$.
\end{proof}

\begin{cor}
Every strongly $\mathbb{D}^d$-stable polynomial $p$ is an eventual
Agler denominator, i.e., there exists
$n=(n_1,\ldots,n_d)\in\mathbb{Z}^d_+$ such that the rational inner
function
\begin{equation}\label{eq:inner}
\frac{z^n\bar{p}(1/z)}{p(z)}
\end{equation}
 is in the Schur--Agler class. Here
for $z=(z_1,\ldots,z_d)$ we set $1/z=(1/z_1,\ldots,1/z_d)$,
$\bar{p}(z)=\overline{p(\bar{z}_1,\ldots,\bar{z}_d)}$, and
$z^n=z_1^{n_1}\cdots z_d^{n_d}$.
\end{cor}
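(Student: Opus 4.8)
The plan is to derive the corollary from the main theorem, Theorem~\ref{thm:polyball}, applied to $\mathbb D^d$ regarded as the matrix polyball with $k=d$ and $\ell_r=m_r=1$ for all $r$, so that $Z_n=\diag[z_1I_{n_1},\dots,z_dI_{n_d}]$. First I would normalize and reduce to the irreducible case: dividing $p$ by the nonzero constant $p(0)$ changes neither the zero set of $p$ nor the function $z^n\bar p(1/z)/p(z)$, so we may assume $p(0)=1$; and if $p=\prod_j q_j$ with each $q_j$ irreducible, strongly $\mathbb D^d$-stable, and $q_j(0)=1$, then, since $z^n\bar p(1/z)=\prod_j z^{n^{(j)}}\bar q_j(1/z)$ for $n=\sum_j n^{(j)}$ and the Schur--Agler class is closed under products, it suffices to treat each $q_j$. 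So assume $p$ irreducible with $p(0)=1$. By Theorem~\ref{thm:polyball} there are $n=(n_1,\dots,n_d)\in\mathbb Z_+^d$ and a strict contraction $K\in\mathbb C^{N\times N}$, $N=\sum_r n_r$, with $p=\det(I-KZ_n)$. Since $z_i$ occurs in $\det(I-KZ_n)$ with partial degree at most $n_i$, we have $n\ge\deg p$, so $\tilde p(z):=z^n\bar p(1/z)$ is a polynomial; as $p$ has no zeros on $\overline{\mathbb D^d}$ while $|\tilde p|=|p|$ on $\mathbb T^d$, the rational function $\tilde p/p$ is regular on $\overline{\mathbb D^d}$ and inner, and the goal becomes $\tilde p/p\in\mathcal{SA}_Z$.

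Next I would pass to a matrix-valued Schur--Agler function and read off $\tilde p/p$ as its determinant. Form the Julia (elementary-rotation) unitary completion
\[
\mathfrak U=\begin{bmatrix}K&D_{K^*}\\ D_K&-K^*\end{bmatrix},\qquad D_K=(I-K^*K)^{1/2},\quad D_{K^*}=(I-KK^*)^{1/2},
\]
which is unitary since $\|K\|\le1$, with $D_K,D_{K^*}$ invertible since $\|K\|<1$, and let $F(z)=-K^*+D_KZ_n(I-KZ_n)^{-1}D_{K^*}$. By the easy direction of the Agler--Ball--Bolotnikov realization theorem (the case $\mathbf P=Z$; see \cite{AT,BB}), the contractive (indeed unitary) realization shows $F\in\mathcal{SA}_Z(\mathbb C^N,\mathbb C^N)$. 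Now I would compute $\det\!\left[\begin{smallmatrix}I-KZ_n&D_{K^*}\\-D_KZ_n&-K^*\end{smallmatrix}\right]$ in two ways: the Schur complement with pivot $I-KZ_n$ gives $\det(I-KZ_n)\det F(z)=p(z)\det F(z)$; on the other hand this matrix equals $\left[\begin{smallmatrix}I&0\\0&0\end{smallmatrix}\right]-\mathfrak U\left[\begin{smallmatrix}Z_n&0\\0&-I\end{smallmatrix}\right]$, and using $\mathfrak U^{-1}=\mathfrak U^*$ together with the block-triangularity of $\mathfrak U^*\!\left[\begin{smallmatrix}I&0\\0&0\end{smallmatrix}\right]-\left[\begin{smallmatrix}Z_n&0\\0&-I\end{smallmatrix}\right]$ one gets $\det(\mathfrak U)\det(K^*-Z_n)$; finally $\det(K^*-Z_n)=\det(\bar K-Z_n)$ (since $Z_n$ is symmetric) equals $\pm\tilde p(z)$ because $\tilde p(z)=\det(Z_n-\bar K)$. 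Comparing, $\tilde p/p$ equals a unimodular constant times $\det F$. This identity is purely algebraic and uses neither minimality of the realization nor innerness of $F$.

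It remains to show that $\det F$ is Schur--Agler, i.e., that the determinant of a matrix-valued Schur--Agler function is again Schur--Agler; this is the step I expect to be the main obstacle. My approach would be to apply the top-exterior-power functor to a contractive colligation realizing $F$: the $N$-th exterior power of a contractive colligation is again a contractive colligation whose transfer function is $\det F$, but now realized with respect to a domain $\mathcal D_{\mathbf P}$ with $\mathbf P$ diagonal whose entries are monomials $z^\alpha$ in $z_1,\dots,z_d$ of positive degree (the powers arise because $z_i$, acting as a scalar on the $i$-th summand of the state space, acts as $z_i^k$ on a $k$-fold exterior product of that summand). Hence $\det F\in\mathcal{SA}_{\mathbf P}$; and since $\|T_i\|<1$ for all $i$ forces $\|T^\alpha\|<1$ for every nonzero multi-index $\alpha$, we have $\mathcal T_Z\subseteq\mathcal T_{\mathbf P}$, whence $\mathcal{SA}_{\mathbf P}\subseteq\mathcal{SA}_Z$; therefore $\det F\in\mathcal{SA}_Z$ and so $\tilde p/p\in\mathcal{SA}_Z$, as desired. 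The only delicate point is the exterior-power colligation construction itself --- verifying that its transfer function is exactly $\det F$ and that it has the claimed structured form; once that is in place, the rest is the bookkeeping above. As a byproduct, one already sees from $F$ that $p$ is the denominator of the matrix-valued rational inner Schur--Agler function $F$, so the essential content beyond Theorem~\ref{thm:polyball} is precisely this passage from a matrix-valued to a scalar-valued Schur--Agler representative via the determinant.
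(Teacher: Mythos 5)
Your reduction of the corollary is sound as far as it goes, and it essentially reconstructs the content of the result that the paper simply cites at this point: the paper's own proof is two lines --- apply Theorem~\ref{thm:polyball} in the polydisk case to get $p=\det(I-KZ_n)$ with $\|K\|<1$, then invoke \cite[Theorem 5.2]{GKVW}, which is exactly the statement that a polynomial with a contractive determinantal representation is an eventual Agler denominator. Your normalization, the factorization into irreducible factors (products of Schur--Agler functions are Schur--Agler), the Julia completion, the identity $\det F=\gamma\,\tilde p/p$ with $|\gamma|=1$, and the membership $F\in\mathcal{SA}_Z(\mathbb C^N,\mathbb C^N)$ are all correct. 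But the step you yourself flag as the crux --- that $\det F$ is again Schur--Agler --- is precisely the content of the cited theorem, and the exterior-power argument you propose for it fails. The top exterior power of the colligation is indeed a unitary colligation adapted to a monomial-diagonal $\mathbf P$, but its transfer function is \emph{not} $\det F$: writing $F(z)=P_{\mathcal U}U(I-\hat ZU)^{-1}\iota_{\mathcal U}$ with $\hat Z=Z P_{\mathcal X}$, one has $\Lambda^m\bigl((I-\hat ZU)^{-1}\bigr)=\bigl(\Lambda^m(I-\hat ZU)\bigr)^{-1}$, and $\Lambda^m(I-\hat ZU)$ is a degree-$m$ polynomial in $\hat ZU$, not of the affine form $I-\tilde A\tilde Z$; the transfer-function structure is destroyed.

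A concrete counterexample: take $d=1$, $N=1$, $m=2$, and the unitary $U$ with $Ue_1=e_2$, $Ue_2=e_3$, $Ue_3=e_1$, state space $\mathbb{C}e_1$ and input/output space $\operatorname{span}\{e_2,e_3\}$. Then $F(z)=\left[\begin{smallmatrix}0&z\\1&0\end{smallmatrix}\right]$ and $\det F(z)=-z$, whereas the colligation $\Lambda^2U$ with state space $\operatorname{span}\{e_1\wedge e_2,\,e_1\wedge e_3\}$, input/output $\mathbb{C}\,e_2\wedge e_3$, and $\tilde Z=zI_2$ has transfer function $z^2$. So the exterior-power colligation realizes a different function, and your argument yields no conclusion about $\det F$. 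The fact you need --- that $\gamma\det(Z_n-K^*)/\det(I-KZ_n)$ is Schur--Agler whenever $\|K\|\le1$ --- is true, but it is a genuine theorem (it \emph{is} \cite[Theorem 5.2]{GKVW}) and requires a separate argument; it does not follow from the functorial device you describe, nor from the general inequality $\|F(T)\|\le1\Rightarrow|\det F(T)|\le1$, since $(\det F)(T)$ is the formal determinant of the block matrix $[F_{ij}(T)]$ and not the scalar $\det(F(T))$. As written, the proposal therefore has an unfilled gap at its central step.
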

\begin{proof}
By Theorem \ref{thm:polyball} applied to the polydisk case, $p$
has a strictly contractive determinantal representation
\eqref{eq:contr-det-repr}. By \cite[Theorem 5.2]{GKVW}, $p$ is an
eventual Agler denominator. Moreover, $n$ in \eqref{eq:inner} can
be chosen the same as in a (not necessarily strictly) contractive
determinantal representation \eqref{eq:contr-det-repr} for $p$.
\end{proof}

\end{document}